\theoremstyle{plain} 
\newtheorem{theorem}{Theorem}[section]
\newtheorem{cor}[theorem]{Corollary}
\newtheorem{lemma}[theorem]{Lemma}
\newtheorem{prop}[theorem]{Proposition}
\theoremstyle{definition}
\newtheorem{example}[theorem]{Example}
\theoremstyle{remark}
\numberwithin{equation}{section}
\DeclareMathOperator{\im}{Im}
\DeclareMathOperator{\rad}{rad}
\DeclareMathOperator{\soc}{soc}
\newcommand{\ra}{\rightarrow}
\newcommand{\field}[1]{\mathbb{#1}}
\newcommand{\F}{\ensuremath{\field{F}}}
\newcommand{\C}{\ensuremath{\field{C}}}
\newcommand{\Q}{\ensuremath{\field{Q}}}
\newcommand{\Z}{\ensuremath{\field{Z}}}
\newcommand{\size}[1]{\lvert #1 \rvert}
\newcommand{\abs}[1]{\lvert #1 \rvert}
\newcommand{\U}{\ensuremath{U}}
\newcommand{\A}{\ensuremath{A}}
\newcommand{\ve}{\varepsilon}
\newcommand{\Glt}{G_{\mathrm{lt}}}
\newcommand{\Grt}{G_{\mathrm{rt}}}
\newcommand{\zint}{\mathbb Z}
\newcommand{\To}{\longrightarrow}
\newcommand{\upontop}[2]{\genfrac{}{}{0pt}{}{#1}{#2}}
\begin{document}

\title[Extension Theorem]{MacWilliams' Extension Theorem\\ 
  for Bi-Invariant Weights over\\
  Finite Principal Ideal Rings}

\date{\today}

\author[Greferath et al.]{Marcus Greferath}
\author[]{Thomas Honold}
\author[]{Cathy Mc Fadden}
\author[]{Jay A. Wood}
\author[]{Jens Zumbr\"agel}


\address{%
  University College Dublin\\
  Zhejiang University \\
  Western Michigan University}
\email{\sloppy \{marcus.greferath, cathy.mcfadden, 
  jens.zumbragel\}@ucd.ie, honold@zju.edu.cn, jay.wood@wmich.edu}

\thanks{This work was partially supported by Science Foundation
  Ireland under Grants 06/MI/006 and 08/IN.1/I1950, and by a
  sabbatical leave from Western Michigan University.  JAW thanks the
  Claude Shannon Institute for its hospitality and support during a
  research visit in November 2011.}


\dedicatory{In memoriam Werner Heise (1944--2013)}


\begin{abstract}
  A finite ring $R$ and a weight $w$ on $R$ satisfy the
  \emph{Extension Property} if every $R$-linear $w$-isometry between
  two $R$-linear codes in $R^n$ extends to a monomial transformation
  of $R^n$ that preserves $w$.  MacWilliams proved that finite fields
  with the Hamming weight satisfy the Extension Property.  It is known
  that finite Frobenius rings with either the Hamming weight or the
  homogeneous weight satisfy the Extension Property.  Conversely, if a
  finite ring with the Hamming or homogeneous weight satisfies the
  Extension Property, then the ring is Frobenius.

  This paper addresses the question of a characterization of all
  bi-invariant weights on a finite ring that satisfy the Extension
  Property. Having solved this question in previous papers for all
  direct products of finite chain rings and for matrix rings, we have
  now arrived at a characterization of these weights for finite
  principal ideal rings, which form a large subclass of the finite
  Frobenius rings. We do not assume commutativity of the rings in
  question.

  \bigskip

  \noindent {\sc Key Words:} Frobenius ring, principal ideal ring,
  linear code, extension theorem, M\"obius function.

  \medskip

  \noindent $2010$ Mathematics Subject Classification: Primary 94B05;
  Secondary 16S36, 20M25.

\end{abstract}

\maketitle


\section{Introduction}

Let $R$ be a finite ring equipped with a weight $w$.  Two linear codes
$C, D \le {_RR^n}$ are \emph{isometrically equivalent} if there is an
isometry between them, i.e., an $R$-linear bijection $\varphi: C
\longrightarrow D$ that satisfies $w(\varphi(c))=w(c)$ for all $c\in
C$. We say that $\varphi$ \emph{preserves} the weight $w$.

MacWilliams in her doctoral dissertation \cite{macw63} and later
Bogart, Goldberg, and Gordon~\cite{bogagoldgord78} proved that, in the
case where $R$ is a finite field and $w$ is the Hamming weight, every
isometry is the restriction of a {\em monomial transformation} $\Phi$
of the ambient space $_RR^n$.  A monomial transformation of $_RR^n$ is
simply a left linear mapping $\Phi: R^n\longrightarrow R^n$ the matrix
representation of which is a product of a permutation matrix and an
invertible diagonal matrix.  Said another way, every Hamming isometry
over a finite field extends to a monomial transformation.  This result
is often called the \emph{MacWilliams Extension Theorem} or the
\emph{MacWilliams Equivalence Theorem}.

With increased interest in linear codes over finite rings there arose
the natural question: could the Extension Theorem be proved in the
context of ring-linear coding theory? This question appeared
complicated, as two different weights were pertinent: the traditional
Hamming weight $w_{\rm H}$ and also a new weight $w_{\rm hom}$ called
the \emph{homogeneous} weight by its discoverers Constantinescu
and Heise~\cite{consheis97}.

In~\cite{wood99} Wood proved the MacWilliams Extension Theorem
for all linear codes over finite Frobenius rings equipped with the
Hamming weight. In the commutative case he showed in the same paper
that the Frobenius property was not only sufficient but also
necessary.  In the non-commutative case, the necessity of the
Frobenius property was proved in \cite{wood08a}.

Inspired by the paper of Constantinescu, Heise, and
Honold~\cite{consheishono96} which used combinatorial methods to prove
the Extension Theorem for homogeneous weights on $\zint_m$, Greferath
and Schmidt~\cite{grefschm00} showed that the Extension Theorem is
true for linear codes over finite Frobenius rings when using the
homogeneous weight.  Moreover, they showed that for all finite rings
every Hamming isometry between two linear codes is a homogeneous
isometry and vice versa.

The situation can be viewed as follows: for $R$ a finite ring, and
either the Hamming weight or the homogeneous weight, the Extension
Theorem holds for all linear codes in $R^n$ if and only if the ring is
Frobenius.  This is a special case of more general results by
Greferath, Nechaev, and Wisbauer~\cite{grefnechwisb04} who proved that
if the codes are submodules of a quasi-Frobenius bi-module $_RA_R$
over any finite ring $R$, then the Extension Theorem holds for the
Hamming and homogeneous weights.  The converse of this was proved by
Wood in \cite{wood09}. \medskip

Having understood all requirements on the algebraic side of the
problem, we now focus on the metrical aspect. This paper aims to
further develop a characterization of all weights on a finite
(Frobenius) ring, for which the corresponding isometries satisfy the
Extension Theorem.

In our discussion we will assume that the weights in question are
bi-invariant, which means that $w(ux) = w(x) =w(xu)$ for all $x\in R$
and $u\in R^\times$.  Our main results do not apply to weights with
smaller symmetry groups such as the Lee or Euclidean weight (on
$R=\Z_m$, except for $m\in\{2,3,4,6\}$), despite their importance for
ring-linear coding theory.

The goal of this paper is to give a necessary and sufficient condition
that a bi-invariant weight $w$ must satisfy in order for the Extension
Theorem to hold for isometries preserving $w$.  We are not able to
characterize all such weights when the underlying ring is an arbitrary
Frobenius ring, but we do achieve a complete result for
\emph{principal ideal rings}. These are rings in which each left or
right ideal is principal, and they form a large subclass of the finite
Frobenius rings.

The present work is a continuation and generalization of earlier work
on this topic \cite{wood97, wood99a, grefhono05, grefhono06, wood09,
  grefmcfazumb13}.  As in \cite{grefhono06, grefmcfazumb13} the
M\"obius function on the partially ordered set of (principal, right)
ideals is crucial for the statement and proof of our main
characterization result; however, in contrast to these works we do not
need the values of the M\"obius function explicitly, but use its
defining properties instead to achieve a more general result.  Our
restriction to principal ideal rings stems from our method of proof,
which requires the annihilator of a principal ideal to be principal.
The main result was proved for the case of finite chain rings in
\cite[Theorem~3.2]{grefhono05} (and in a more general form in
\cite[Theorem~16]{wood97}), in the case $\Z_m$ in
\cite[Theorem~8]{grefhono06}, for direct products of finite chain
rings in \cite[Theorem~22]{grefmcfazumb13}, and for matrix rings over
finite fields in \cite[Theorem~9.5]{wood09} (see
Example~\ref{ex:examples} below).  The main result gives a concrete
manifestation of \cite[Proposition~12]{wood97} and
\cite[Theorem~3.1]{wood99a}.  Further to \cite{grefmcfazumb13} we
prove that our condition on the weight is not only sufficient, but
also necessary for the Extension Theorem, using an argument similar to
that in \cite{grefhono06, wood08a}. \medskip

Here is a short summary of the contents of the paper.  In
Section~\ref{sec:prelims} we review the terminology of Frobenius
rings, M\"obius functions, and orthogonality matrices needed for the
statements and proofs of our main results.  In addition, we prove a
result (Corollary~\ref{cor_smult}) that says that a right-invariant
weight $w$ on $R$ satisfies the Extension Property if the Hamming
weight $w_{\rm H}$ is a correlation multiple of $w$.

In Section~\ref{sec:orthog-matrices} we show that the Extension
Property holds for a bi-invariant weight if and only if its
orthogonality matrix is invertible.  The main results are stated in
Section~\ref{sec:bi-inv-wts}.  By an appropriate unimodular change of
basis, the orthogonality matrix can be put into triangular form, with
a simple expression for the diagonal entries (Theorem~\ref{thm:WQ}).
The Main Result (Theorem~\ref{maintheorem}) then says that the
Extension Property holds if and only if all the diagonal entries of
the orthogonality matrix are nonzero.  A proof of Theorem~\ref{thm:WQ}
is given in Section~\ref{sec:proof}. \medskip

This paper is written in memory of our friend, teacher, and colleague
Werner Heise who, sadly, passed away in February 2013 after a long
illness. Werner has been very influential in ring-linear coding theory
through his discovery of the homogeneous weight on ${\mathbb Z}_m$
(``Heise weight'') and subsequent contributions.


\section{Notation and Background}%
\label{sec:prelims}

In all that follows, rings $R$ will be finite, associative and possess
an identity $1$. The group of invertible elements (units) will be
denoted by $R^\times$ or $\U$.  Any module $_RM$ will be unital,
meaning $1m=m$ for all $m\in M$.

\subsection*{Frobenius Rings}

We describe properties of Frobenius rings needed in this paper, as in
\cite{honold01}.

The character group of the additive group of a ring $R$ is defined as
$\widehat{R}:={\rm Hom}_{\mathbb Z}(R,{\mathbb C}^\times)$.
This group has the structure of an $R,R$-bimodule by defining
$\chi^r(x):=\chi(rx)$ and $^r\chi(x):=\chi(xr)$ for all $r,x\in R$,
and for all $\chi\in \widehat{R}$.

The \emph{left socle} ${\rm soc}(_RR)$ is defined as the sum of all
minimal left ideals of $R$. It is a two-sided ideal.  A similar
definition leads to the \emph{right socle} ${\rm soc}(R_R)$ which is
also two-sided, but will not necessarily coincide with its left
counterpart.

A finite ring $R$ is  
\emph{Frobenius} if one of the following four
equivalent statements holds:
\begin{itemize}\itemsep=1mm
\item $_RR \cong {_R\widehat{R}}$.
\item $R_R \cong {\widehat{R}_R}$.
\item ${\rm soc}(_RR)$ is left principal.
\item ${\rm soc}(R_R)$ is right principal.
\end{itemize} 
For a finite Frobenius ring the left and right socles coincide.

Crucial for later use is the fact that finite Frobenius rings are
quasi-Frobenius and hence possess a perfect duality. This means the
following: Let $L(_RR)$ denote the lattice of all left ideals of $R$,
and let $L(R_R)$ denote the lattice of all right ideals of $R$. There
is a mapping $\perp: L(_RR) \longrightarrow L(R_R),\; I \mapsto
I^\perp$ where $I^\perp:= \{x\in R \mid Ix=0\}$ is the right
annihilator of $I$ in $R$. This mapping is an order anti-isomorphism
between the two lattices.  The inverse mapping associates to every
right ideal its left annihilator.

\subsection*{Principal Ideal Rings}

A ring $R$ is \emph{left principal} if every left ideal is left
principal, similarly a ring is \emph{right principal} if every right
ideal is right principal.  If a ring is both left principal and right
principal it is a {\em principal ideal ring}.  Nechaev in
\cite{nechaev73} proved that ``a finite ring with identity in which
every two-sided ideal is left principal is a principal ideal ring."
Hence every finite left principal ideal ring is a principal ideal
ring.  Further, as argued in \cite{nechaev73}, the finite principal
ideal rings are precisely the finite direct sums of matrix rings over
finite chain rings.  They form a subclass of the class of finite
Frobenius rings (since, for example, their one-sided socles are
principal).

\subsection*{M\"obius Function}

The reader who is interested in a more detailed survey of the
following is referred to \cite[Chapter~IV]{aigner}, \cite{rota64}, or
\cite[Chapter~3.6]{stanley}.

For a finite partially-ordered set (poset) $P$, we have the incidence
algebra
\[ {\mathbb A}(P) \;:=\; \{\,f: P\times P \longrightarrow {\mathbb Q} 
\mid\, x \not\le y \;\; \mbox{implies} \;\; f(x,y)=0 \,\} \:. \] 
Addition and scalar multiplication in ${\mathbb A}( P)$ are defined
point-wise; multiplication is convolution:
\[  (f*g)(a,b) = \sum_{a \le c \le b} f(a,c) \, g(c,b) \:. \]
The invertible elements are exactly the functions $f\in {\mathbb
  A}(P)$ satisfying $f(x,x) \ne 0$ for all $x\in P$.  In particular,
the characteristic function of the partial order of $P$ given by
\[ \zeta: P\times P \longrightarrow {\mathbb Q} \:,
\quad (x,y) \mapsto \left\{\begin{array}{lcl}
    1 & : & x\le y\\
    0 & : & \mbox{otherwise}
  \end{array}\right. \] 
is an invertible element of ${\mathbb A}(P)$.  Its inverse is the {\em
  M\"obius function\/} $\mu: P\times P \longrightarrow {\mathbb Q}$
implicitly defined by $\mu(x,x) = 1$ and \[ \sum_{x\le t \le y}
\mu(x,t) \;=\; 0 \] if $x<y$, and $\mu(x,y) = 0$ if $x \not\le y$.

\subsection*{Weights and Code Isometries}

Let $R$ be any finite ring.  By a \emph{weight} $w$ we mean any
$\Q$-valued function $w: R \longrightarrow {\mathbb Q}$ on $R$,
without presuming any particular properties.  As usual we extend $w$
additively to a weight on $R^n$ by setting
\[ w: R^n \longrightarrow {\mathbb Q} \:,\quad 
x \mapsto \sum_{i=1}^n w(x_i) \:. \] 
The \emph{left} and \emph{right symmetry groups} of $w$ are defined by
\[ \Glt(w) := \{ u \in U: w(ux) = w(x), x \in R\} \:, \quad 
\Grt(w) := \{ v \in U: w(xv) = w(x), x \in R\} \:. \]
A weight $w$ is called \emph{left} (resp.\ \emph{right})
\emph{invariant} if $\Glt(w) = U$ (resp.\ $\Grt(w) = U$).

A (left) \emph{linear code} of length $n$ over $R$ is a submodule $C$
of $ {}_{R}R^{n}$. A {\em $w$-isometry\/} is a linear map $\varphi : C
\To {}_{R}R^{n}$ with $w(\varphi(x)) = w(x)$ for all $x \in C$, i.e.,
a mapping that preserves the weight $w$.

A \emph{monomial transformation} is a bijective (left) $R$-linear
mapping $\Phi : R^{n} \longrightarrow R^{n}$ such that there is a
permutation $\pi\in S_n$ and units $u_1, \ldots, u_n\in \U$ so that \[
\Phi(x_1, \ldots, x_n) \;=\; ( x_{\pi(1)} u_1 , \dots , x_{\pi(n)}
u_n ) \] for every $(x_1 , \dots , x_n ) \in R^{n}$. In other words,
the matrix that represents $\Phi$ with respect to the standard basis
of $_RR^n$ decomposes as a product of a permutation matrix and an
invertible diagonal matrix.  A \emph{$\Grt(w)$-monomial
  transformation} is one where the units $u_i$ belong to the right
symmetry group $\Grt(w)$.  A $\Grt(w)$-monomial transformation is a
$w$-isometry of $R^n$.

We say that a finite ring $R$ and a weight $w$ on $R$ satisfy the
\emph{Extension Property} if the following holds: For every positive
length $n$ and for every linear code $C\le {_RR^n}$, every injective
$w$-isometry $\varphi : C \longrightarrow {_RR}^{n}$ is the
restriction of a $\Grt(w)$-monomial transformation of $_RR^{n}$.  That
is, every injective $w$-isometry $\varphi$ extends to a monomial
transformation that is itself a $w$-isometry of $R^n$. \medskip

Let $w: R \longrightarrow {\mathbb Q}$ be a weight and let $f: R
\longrightarrow {\mathbb Q}$ be any function. We define a new weight
$wf$ as
\[ wf: R \longrightarrow {\mathbb Q} \:, \quad 
x \mapsto \sum_{r\in R} w(rx)\,f(r) \:. \] 
By the operation of \emph{right correlation} $(w,f)\mapsto wf$, the
vector space $V := \Q^R$ of all weights on $R$ becomes a right module
$V_\A$ over $\A = \Q[(R,\cdot)]$, the rational semigroup algebra of
the multiplicative semigroup $(R,\cdot)$ of the ring
(see~\cite{grefmcfazumb13}).  For $r\in R$ denote by $e_r$ the weight
where $e_r(r) = 1$ and $e_r(s) = 0$ for $s\ne r$.  Then $we_r$ is
simply given by $(we_r)(x) = w(rx)$.

Denote the natural additive extension of $wf$ to $R^n$ by $wf$ also.

\begin{lemma}\label{lem_smult}
  Let $C \le {_RR^n}$ be a linear code and let $\varphi: C
  \longrightarrow R^n$ be a $w$-isometry, then $\varphi$ is also a
  $wf$-isometry for any function $f: R \longrightarrow {\mathbb Q}$.
\end{lemma}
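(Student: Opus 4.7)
The plan is to first rewrite the additively-extended weight $wf$ on $R^n$ in a form that makes left scalar multiplication visible. For any $y \in R^n$,
\[
  wf(y) \;=\; \sum_{i=1}^n (wf)(y_i)
  \;=\; \sum_{i=1}^n \sum_{r\in R} w(r y_i)\,f(r)
  \;=\; \sum_{r\in R} f(r) \sum_{i=1}^n w(r y_i)
  \;=\; \sum_{r\in R} f(r)\,w(ry),
\]
where on the right-hand side $ry$ denotes the left scalar multiple $(ry_1,\dots,ry_n)$ in ${}_RR^n$, and $w$ is the additive extension of the original weight to $R^n$. The only thing being used here is that the double sum is finite and may be reordered, so this step carries no obstacle.

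With this reformulation in hand, the result is essentially automatic from $R$-linearity. Fix $x\in C$. Since $C$ is a submodule, $rx\in C$ for every $r\in R$, and since $\varphi$ is left $R$-linear, $r\varphi(x)=\varphi(rx)$. Because $\varphi$ is a $w$-isometry, $w(\varphi(rx))=w(rx)$ for each $r\in R$. Substituting into the displayed formula,
\[
  wf(\varphi(x)) \;=\; \sum_{r\in R} f(r)\,w(r\varphi(x))
  \;=\; \sum_{r\in R} f(r)\,w(\varphi(rx))
  \;=\; \sum_{r\in R} f(r)\,w(rx)
  \;=\; wf(x),
\]
which is the desired equality. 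No step presents a genuine difficulty; the whole content of the lemma is that right correlation by $f$ preserves the class of isometries because it is built from left scalar multiplications, which commute with the $R$-linear map $\varphi$.
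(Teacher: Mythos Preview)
Your proof is correct and follows essentially the same approach as the paper: both compute $(wf)(\varphi(x))$ by expanding the definition, using $R$-linearity of $\varphi$ to rewrite $r\varphi(x)=\varphi(rx)$, and then applying the $w$-isometry hypothesis. You are slightly more explicit in justifying that the additive extension of $wf$ to $R^n$ satisfies $wf(y)=\sum_{r\in R} f(r)\,w(ry)$, which the paper's proof uses tacitly in its first equality.
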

  
\begin{proof}
  For all $x\in C$ we compute 
  \begin{align*}
    (wf) (\varphi(x))
    & \;=\; \sum_{r\in R} w(r\varphi(x)) \, f(r) \;=\; 
    \sum_{r\in R} w(\varphi(rx)) \, f(r)  \\
    & \;=\;  \sum_{r\in R} w(rx) \, f(r) 
    \;=\; (wf)(x) \:. \qedhere
  \end{align*}  
\end{proof}

For a weight $w$ consider the $\Q$-linear map $\tilde w: \A \to V$,
$f\mapsto wf$.  By Lemma~\ref{lem_smult}, if $\varphi$ is a
$w$-isometry then $\varphi$ is a $w'$- isometry for all $w'
\in\im\tilde w$.  Note that $\im\tilde w = w\A \le V_\A$.

\subsection*{Weights on Frobenius Rings}

Now let $R$ be a finite Frobenius ring.  We describe two approaches
that ultimately lead to the same criterion for a weight $w$ to satisfy
the Extension Property.

\subsubsection*{Approach~1} From earlier work~\cite{wood99} we know
that the Hamming weight $w_{\rm H}$ satisfies the Extension Property.
Combining this fact with Lemma~\ref{lem_smult}, we immediately obtain
the following result.

\begin{cor}\label{cor_smult}
  Let $R$ be a finite Frobenius ring and let $w$ be a weight on $R$
  such that $\Grt(w) = U$ and $wf = w_{\rm H}$ for some function
  $f:R\to\Q$.  Then $w$ satisfies the Extension Property.
\end{cor}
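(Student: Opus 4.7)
The plan is to chain together the two ingredients that the corollary explicitly hands us: the hypothesis $wf=w_{\mathrm{H}}$ (which converts any $w$-isometry into a Hamming isometry via Lemma~\ref{lem_smult}) and the classical MacWilliams Extension Theorem for the Hamming weight over finite Frobenius rings (Wood~\cite{wood99}), together with the hypothesis $\Grt(w)=U$ (which makes the distinction between a monomial transformation and a $\Grt(w)$-monomial transformation trivial).

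Concretely, let $n\ge 1$, let $C \le {_RR^n}$ be a linear code, and let $\varphi: C \longrightarrow {_RR^n}$ be an injective $w$-isometry. First, I would apply Lemma~\ref{lem_smult} with the given function $f: R \to \Q$. Since the natural additive extension of $wf$ to $R^n$ equals $w_{\mathrm{H}}$ (by the hypothesis $wf=w_{\mathrm{H}}$ and the additivity of the extension), the lemma yields that $\varphi$ is a Hamming isometry, i.e., $w_{\mathrm{H}}(\varphi(x)) = w_{\mathrm{H}}(x)$ for all $x\in C$.

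Next, since $R$ is a finite Frobenius ring, Wood's MacWilliams Extension Theorem for the Hamming weight \cite{wood99} applies to $\varphi$: the injective $w_{\mathrm{H}}$-isometry $\varphi$ is the restriction of a monomial transformation $\Phi: {_RR^n} \To {_RR^n}$ of the form $\Phi(x_1,\dots,x_n) = (x_{\pi(1)} u_1, \dots, x_{\pi(n)} u_n)$ for some $\pi \in S_n$ and some units $u_1,\dots,u_n \in U$.

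Finally, I invoke the hypothesis $\Grt(w)=U$: every unit lies in the right symmetry group of $w$, so $u_1,\dots,u_n \in \Grt(w)$. Therefore $\Phi$ is in fact a $\Grt(w)$-monomial transformation, hence a $w$-isometry of $R^n$ that extends $\varphi$, establishing the Extension Property for $w$. There is essentially no obstacle here: the content of the corollary is simply the bookkeeping that combines Lemma~\ref{lem_smult} with the Frobenius-ring Hamming Extension Theorem, and the assumption $\Grt(w)=U$ is exactly what is needed so that the extending monomial transformation is of the required type.
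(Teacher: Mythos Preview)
Your proof is correct and follows exactly the approach the paper indicates: combine Lemma~\ref{lem_smult} (so that a $w$-isometry is automatically a $w_{\rm H}$-isometry) with Wood's Extension Theorem for the Hamming weight over Frobenius rings, and then use $\Grt(w)=U$ to see that the extending monomial transformation is a $\Grt(w)$-monomial transformation. The paper states the corollary as an immediate consequence of these two facts without writing out the details you supplied.
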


In other words, if $w$ is right-invariant and $w_{\rm H}\in\im\tilde
w$ then $w$ satisfies the Extension Property.

How can we make sure that $w_{\rm H}\in\im\tilde w$?  One idea is to
show that the $\Q$-linear map $\tilde w$ is bijective: Using the
natural basis $(e_r)_{r\in R}$ for $V$ and the property $(we_r)(s) =
w(rs)$ it is easy to see that $\tilde w$ is described by the transpose
of the matrix $(w(rs))_{r,s\in R}$.  However, if the weight function
$w$ is left- or right-invariant {\em or} satisfies $w(0) = 0$ then
this matrix is not invertible.  Therefore we work with a ``reduced''
version of the map $\tilde w$.

As before, let $V := \Q^R$ be the vector space of all weights on $R$,
and let $V_0^\U$ be the subspace of all weights $w$ satisfying $w(0) =
0$ that are right-invariant.  Similarly, we define the subspace $^\U
V_0$ of all weights $w$ with $w(0) = 0$ that are left-invariant.  The
corresponding invariant subspaces of $\A = \Q[(R,\cdot)]$ are
$\A_0^\U$ and $^\U \A_0$, where $\A_0 := \A / \Q e_0$.

If $w$ is a weight in $V_0^\U$ then $wf\in V_0^\U$ for {\em any}
function $f:R\to\Q$, i.e., $\im\tilde w \le V_0^\U$.  In this case we
could examine the bijectivity of the $\Q$-linear map $\tilde w:
\A_0^\U \to V_0^\U$ (the restriction of the above map $\tilde w$).
But this map does not have a nice matrix representation; setting
$e_{s\U} = \sum_{r\in s\U}e_r$ and letting $(e_{s\U})_{s\U\ne 0}$ be
the natural basis for $\A_0^\U$ and for $V_0^U$, the entries of the
matrix turn out to be sums of several values $w(rus)$.

However, if we work with the restriction $\tilde w: {}^\U \A_0 \to
V_0^\U$ instead and if the weight $w$ is bi-invariant (i.e., both
left- and right-invariant), then, with respect to the natural bases,
this $\Q$-linear map does have a nice matrix description, namely the
orthogonality matrix.  This will be explained below. If this map
$\tilde w$ is invertible, then $w$ satisfies the Extension Property by
Corollary~\ref{cor_smult}.

Note: Since $\im\tilde w$ is a submodule of $V_\A$ it follows that
$w_{\rm H}\in\im\tilde w$ if and only if $\im\tilde w_{\rm H} \le \im\tilde w$.
Actually, $\im\tilde w_{\rm H} = V_0^\U$ (see
Proposition~\ref{prop_om-reverse} below), so that $w_{\rm
  H}\in\im\tilde w$ if and only if $V_0^U \subseteq \im\tilde w$.
This is why it is a sensible approach to investigate the
surjectivity/bijectivity of the map $\tilde w$.

\subsubsection*{Approach~2} The same orthogonality matrix that appears
in Approach~1 also appears in \cite{wood97}.  By
\cite[Proposition~12]{wood97} (also, \cite[Theorem~3.1]{wood99a} and
\cite[Section~9.2]{wood09}), the invertibility of the orthogonality
matrix of $w$ implies that a $w$-isometry preserves the so-called
\emph{symmetrized weight composition} associated with $\Grt(w)$.
Then, \cite[Theorem~10]{wood97} shows that any injective linear
homomorphism that preserves the symmetrized weight composition
associated with $\Grt(w)$ extends to a $\Grt(w)$-monomial
transformation.  Thus, if the orthogonality matrix is invertible, any
$w$-isometry extends to a $\Grt(w)$-monomial transformation, and hence
$w$ satisfies the Extension Property.

\subsection*{Orthogonality Matrices}

Let $R$ be a finite Frobenius ring. There is a one-to-one
correspondence between left (resp., right) principal ideals and left
(resp., right) $\U$-orbits.  Each $\U$-orbit is identified with the
principal ideal of which its elements are the generators
(\cite[Proposition~5.1]{wood99}, based on work of Bass).  Define for
$r, s\in R\setminus \{0\}$ the functions $\ve_{R r} (x) = \size{\U
  r}^{-1} $ if $x \in \U r$, i.e., if $Rr = Rx$, and zero otherwise;
similarly, let $e_{sR} (x) = e_{sU}(x) = 1$ if $xR = sR$ and zero
otherwise.  Then $(\ve_{R r})$ and $(e_{sR})$ are bases for $^\U \A_0$
and $V_0^\U$, as $R r$ and $sR$ vary over all left and right nonzero
principal ideals of $R$, respectively.
 
For a bi-invariant weight $w$, define the \emph{orthogonality matrix}
of $w$ by $W_0 = \big(w(rs) \big){}_{Rr\ne 0,\,sR\ne 0}$.  That is,
the entry in the $Rr, sR$-position is the value of the weight $w$ on
the product $rs \in R$.  The value $w(rs)$ is well-defined, because
$w$ is bi-invariant.  Note that $W_0$ is square; this follows from
work of Greferath \cite{gref02} that shows the equality of the number
of left and right principal ideals in a finite Frobenius ring.

\begin{prop}\label{prop_matrix}
  Suppose $w$ is bi-invariant with $w(0)=0$.  Then 
  \[ w \, \ve_{R r} = \sum_{sR\ne 0} w(rs) \, e_{s R} \] 
  for nonzero $R r$, where the sum extends over all the nonzero right
  principal ideals $s R$.  In particular, the matrix representing the
  $\Q$-linear map $\tilde w: {}^\U \A_0 \to V_0^\U$, $f\mapsto wf$,
  with respect to the bases $(\ve_{R r})$ and $(e_{sR})$, is the
  transpose of the matrix $W_0$.
\end{prop}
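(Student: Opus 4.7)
The plan is to verify the stated expansion by direct computation from the definition of right correlation, and then read off the matrix description. First I unpack
\[
(w\ve_{Rr})(x) \;=\; \sum_{t\in R} w(tx)\,\ve_{Rr}(t) \;=\; \size{\U r}^{-1}\sum_{t\in\U r} w(tx).
\]
Every element of $\U r$ has the form $ur$ with $u\in\U$, so left-invariance of $w$ gives $w(tx)=w(urx)=w(rx)$ for each summand. Since the set $\U r$ has cardinality $\size{\U r}$, the right-hand side collapses to the single value $w(rx)$.

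Next I would check that $x\mapsto w(rx)$ coincides with $\sum_{sR\ne 0} w(rs)\,e_{sR}(x)$ on all of $R$. If $x=0$, both sides vanish: the left because $w(0)=0$, the right because $xR=0$ is excluded from the indexing set. If $x\ne 0$, then $xR$ is a nonzero right principal ideal, and the sum has exactly one nonzero term, namely $w(rs_0)$ for any $s_0$ with $s_0R=xR$. By the correspondence between nonzero right principal ideals and right $\U$-orbits in a finite Frobenius ring recalled just before the proposition, $xR=s_0R$ implies $x=s_0v$ for some unit $v\in\U$; right-invariance of $w$ then yields $w(rx)=w(rs_0v)=w(rs_0)$, as required.

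The matrix statement is immediate from the expansion. The identity $\tilde w(\ve_{Rr}) = \sum_{sR\ne 0} w(rs)\,e_{sR}$ says that the $e_{sR}$-coordinate of $\tilde w(\ve_{Rr})$ is $w(rs)$; recording these coordinates in columns indexed by $Rr$ produces a matrix whose $(sR,Rr)$-entry is $w(rs)$, which is precisely the transpose of $W_0=\bigl(w(rs)\bigr)_{Rr,\,sR}$.

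There is no genuinely difficult step; the only delicate point is the twofold appeal to the Bass-type fact that the generators of a principal one-sided ideal in a finite Frobenius ring form a single $\U$-orbit on the appropriate side. That is what ensures both that $\sum_{t\in\U r}$ sweeps out all generators of $Rr$ and that $xR=s_0R$ produces a unit $v$ with $x=s_0v$; granted these, the remainder is a routine manipulation of the definitions together with the bi-invariance of $w$.
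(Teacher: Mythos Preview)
Your proof is correct and follows essentially the same route as the paper's: compute $(w\ve_{Rr})(x)=w(rx)$ using left-invariance, then use right-invariance and $w(0)=0$ to identify this function with $\sum_{sR\ne 0} w(rs)\,e_{sR}$. The only cosmetic difference is that the paper packages the second step as the previously noted fact that $w\ve_{Rr}\in V_0^{\U}$ whenever $w\in V_0^{\U}$, whereas you verify the basis expansion pointwise.
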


\begin{proof}
  Since $w\in V_0^\U$ we have $w \, \ve_{R r} \in V_0^\U$, and therefore
  \[  w \, \ve_{R r} = \sum_{sR\ne 0} (w \, \ve_{R r})(s) \, e_{s R} \:.  \]
  Calculating, using that $w\in {}^\U V_0$, we get:
  \[ (w \, \ve_{R r})(s) = \sum_{t \in R} w(ts) \, \ve_{R r}(t) 
    = \sum_{t \in Ur} \size{\U r}^{-1} w(ts)
    = w(rs) \:. \qedhere \]
\end{proof}

In the algebraic viewpoint of \cite{grefmcfazumb13}, $V_0^{\U}$ is a
right module over $^\U \!\A_0$.  Then, $W_0$ is invertible if and only
if $w$ is a generator for $V_0^{\U}$.

If $R$ is a finite field and $w = w_{\rm H}$, the Hamming weight on
$R$, then $W_0$ is exactly the orthogonality matrix considered by
Bogart, Goldberg, and Gordon~\cite[Section~2]{bogagoldgord78}.  More
general versions of the matrix $W_0$ have been utilized in
\cite{wood97, wood99a, wood09}.

\begin{example} 
  For $R={\mathbb Z}_4$ the Lee weight $w_{\rm Lee}$ assigns $0
  \mapsto 0$, $1\mapsto 1$, $2\mapsto 2$ and $3\mapsto 1$. It is a
  bi-invariant weight function, as is the Hamming weight $w_{\rm H}$
  on $R$. Based on the natural ordering of the (nonzero) principal
  ideals of $R$ as $2R < R$ the orthogonality matrix for $w_{\rm Lee}$
  is
  \[ W_0^{\rm Lee} \, = \, \left[ \begin {array}{cc}
      0 & 2 \\
      2 & 1
    \end{array}\right], \] 
  whereas the orthogonality matrix for $w_{\rm H}$ is given by 
  \[ W_0^{\rm H} \, = \, \left[ \begin {array}{cc}
      0 & 1\\
      1 & 1
    \end{array}\right]. \]
  Both of these matrices are invertible over ${\mathbb Q}$ as observed
  in \cite{gref02}, where it was shown that the Extension Property is
  satisfied.
\end{example}


\section{Orthogonality Matrices and the Extension Theorem}%
\label{sec:orthog-matrices}

In the present section we will show that invertibility of the
orthogonality matrix of a bi-invariant weight is necessary and
sufficient for that weight to satisfy the Extension Property.  We
split this result into two statements.

\begin{prop} 
  Let $R$ be a finite Frobenius ring and let $w$ be a bi-invariant
  weight on $R$.  If the orthogonality matrix $W_0$ of $w$ is
  invertible, then $w$ satisfies the Extension Property.
\end{prop}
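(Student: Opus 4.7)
The plan is to reduce this statement to Corollary~\ref{cor_smult}, which says that a right-invariant weight satisfies the Extension Property as soon as the Hamming weight $w_{\rm H}$ lies in the image of the right-correlation map $\tilde w$. Since a bi-invariant weight is in particular right-invariant, it suffices to produce a function $f:R\to\Q$ with $wf=w_{\rm H}$.

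First I would invoke Proposition~\ref{prop_matrix} (tacitly normalized to $w(0)=0$, the usual convention for a weight): because $w$ is bi-invariant, the $\Q$-linear map
\[ \tilde w : {}^\U\!\A_0 \longrightarrow V_0^\U, \quad f \mapsto wf \]
is represented, with respect to the natural bases $(\ve_{Rr})_{Rr\ne 0}$ and $(e_{sR})_{sR\ne 0}$, by the transpose of the orthogonality matrix $W_0$. Invertibility of $W_0$ thus translates verbatim into the assertion that $\tilde w$ is a $\Q$-linear isomorphism; in particular, $\tilde w$ is surjective onto $V_0^\U$.

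Next I observe that the Hamming weight $w_{\rm H}$ is bi-invariant and vanishes at $0$, so $w_{\rm H}\in V_0^\U$. By the surjectivity just established, there is some $f\in{}^\U\!\A_0$, which (after extending by $f(0)=0$) can be read as a function $R\to\Q$, satisfying $wf=w_{\rm H}$. Because $w$ is bi-invariant, $\Grt(w)=\U$, and Corollary~\ref{cor_smult} then delivers the Extension Property for $w$.

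There is no real obstacle in this proof: it is a bookkeeping exercise that assembles the pieces already prepared in Section~\ref{sec:prelims}. The only subtlety that merits explicit verification is the compatibility between the ``reduced'' correlation map $\tilde w:{}^\U\!\A_0\to V_0^\U$ (whose matrix is $W_0^T$) and the unreduced correlation map $\A\to V$ used in Lemma~\ref{lem_smult} and Corollary~\ref{cor_smult}, so that an $f$ produced in the domain of the former is legitimately a function $R\to\Q$ of the sort required by the latter; this in turn is essentially just the observation that extending $f$ by $f(0)=0$ leaves the value $wf(x)=\sum_{r}w(rx)f(r)$ unchanged.
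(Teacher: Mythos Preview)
Your proof is correct and follows essentially the paper's own Approach~1: invoke Proposition~\ref{prop_matrix} to identify $W_0^T$ as the matrix of $\tilde w:{}^\U\!\A_0\to V_0^\U$, deduce surjectivity, note $w_{\rm H}\in V_0^\U$, and apply Corollary~\ref{cor_smult}. The paper also records a second approach via symmetrized weight compositions (citing \cite{wood97,wood99a}), but your argument matches the primary one.
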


\begin{proof}
  Approach~1: by Proposition~\ref{prop_matrix} the matrix $W_0$
  describes the $\Q$-linear map $\tilde w: {}^\U \A_0 \to V_0^\U$,
  $f\mapsto wf$.  Hence if $W_0$ is invertible the map $\tilde w$ is
  bijective, and in particular $w_{\rm H}\in\im\tilde w$.  Thus by
  Corollary~\ref{cor_smult} the weight $w$ satisfies the Extension
  Property.
  
  Approach~2: apply \cite[Proposition~12]{wood97} or
  \cite[Theorem~3.1]{wood99a}.
\end{proof}

We remark that in the foregoing discussion, $\Q$ could be replaced
throughout by any field $K$ containing $\Q$, for example $K = \C$.

\begin{prop}\label{prop_om-reverse}
  Let $R$ be a finite Frobenius ring, and let $w$ be a bi-invariant
  rational weight on $R$ that satisfies the Extension Property.  Then
  the orthogonality matrix $W_0$ of $w$ is invertible.
\end{prop}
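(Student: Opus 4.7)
The plan is to prove the contrapositive by extracting, from a nontrivial element of $\ker W_0$, an injective $w$-isometry between two rank-one codes that does not extend to a monomial transformation. For convenience I assume $w(0)=0$, the natural setting in which $W_0$ represents $\tilde w:{}^\U\A_0\to V_0^\U$ as in Proposition~\ref{prop_matrix}.

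Let $s_1,\dots,s_k$ be representatives of the nonzero right $\U$-orbits, with $s_{j_0}=1$ lying in the unit orbit. If $W_0$ is singular, clearing denominators in a linear dependence among its columns produces integers $c_1,\dots,c_k$, not all zero, with $\sum_j c_j\,w(r s_j)=0$ for every representative $r$ of a nonzero left $\U$-orbit. Write $c_j=c_j^+-c_j^-$ with $c_j^\pm\in\Z_{\ge 0}$ of disjoint support, fix an integer $K\ge 1$, and let $a\in R^n$ be the row vector whose coordinates consist of $c_j^++K$ copies of $s_j$ for each $j$; let $b\in R^n$ consist of $c_j^-+K$ copies of $s_j$, padding whichever of the two is shorter with zeros so that both have a common length $n$.

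Because $K\ge 1$, every $s_j$---in particular the unit $s_{j_0}$---occurs among the coordinates of $a$ and of $b$, so $\{r\in R:ra=0\}=\bigcap_j\{r\in R:rs_j=0\}=0$, and likewise for $b$. Thus $Ra\cong R\cong Rb$ as left $R$-modules, and the rule $\varphi(ra):=rb$ defines an $R$-linear isomorphism $\varphi:Ra\to Rb$. Since $w(0)=0$, the zero padding contributes nothing to weights, and for every $r\in R$,
\[
w(ra)-w(rb)=\sum_j(c_j^+-c_j^-)\,w(rs_j)=\sum_j c_j\,w(rs_j),
\]
which vanishes for $r\ne 0$ by the chosen linear dependence and for $r=0$ since $w(0)=0$; so $\varphi$ is a $w$-isometry.

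Finally, any monomial $\Phi$ with $\Phi(a)=b$ is given by a permutation $\pi$ of the coordinates together with units $u_i$ satisfying $b_i=a_{\pi(i)}u_i$, and therefore must induce a bijection of coordinates preserving right $\U$-orbits. Our construction, however, places $c_j^++K$ coordinates of $a$ in the orbit of $s_j$ against $c_j^-+K$ coordinates of $b$, and these multiplicities differ for at least one $j$ because $c\ne 0$. Hence no monomial $\Phi$ can satisfy $\Phi(a)=b$, and in particular no $\Grt(w)$-monomial extends $\varphi$---contradicting the Extension Property and completing the argument. The main delicacy is arranging the two competing requirements: making both $Ra$ and $Rb$ free of rank one (achieved by the bulk padding $+K$ and by including a unit orbit among the $s_j$), while preserving the coordinate-count mismatch $c_j^+\ne c_j^-$ that is what actually rules out extension.
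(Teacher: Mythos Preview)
Your proof is correct and follows essentially the same strategy as the paper's: assume $W_0$ singular, take an integer kernel vector, split it into positive and negative parts to build two rank-one generators, and verify that the resulting $R$-linear bijection is a $w$-isometry that cannot extend monomially because the right-$\U$-orbit multiplicities of the two generators differ. The only cosmetic difference is that the paper kills the left annihilator by appending a single trailing $1$ to each generator, whereas you pad both generators with $K$ copies of every orbit representative and then zero-pad to equalize lengths (leaning on $w(0)=0$); the underlying argument is the same.
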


\begin{proof}
  The proof mimics that of \cite[Theorem~4.1]{wood08a} and
  \cite[Proposition~7]{grefhono06}.  Assume $W_0$ singular for the
  sake of contradiction. Then there exists a nonzero rational vector
  $v = (v_{cR})_{cR\ne 0}$ such that $W_0 v = 0$.  Without loss of
  generality, we may assume that $v$ has integer entries.  We proceed
  to build two linear codes $C_+, C_-$ over $R$.  Each of the codes
  will have only one generator.  The generator for $C_{\pm}$ is a
  vector $g_{\pm}$ with the following property: for each ideal $cR \le
  R_R$ with $v_{cR} > 0$ (for $g_+$), resp., $v_{cR} < 0$ (for $g_-$),
  the vector $g_{\pm}$ contains $\abs{v_{cR}}$ entries equal to
  $c$. To make these two generators annihilator-free, we append to
  both a trailing $1\in R$. The typical codeword in $C_{\pm}$ is hence
  of the form $a g_{\pm}$ for suitable $a \in R$.  We compare $w(a
  g_+)$ and $w(a g_-)$ for every $a \in R$ by calculating the
  difference $D( a ) = w(a g_+) - w(a g_-)$.  By our construction of
  the generators $g_{\pm}$, we have \[ D( a ) \;=\; \sum_{cR\ne 0}
  w(ac) \, v_{cR} \;=\; (W_0 v)_{Ra} \;=\; 0 \:, \] for all $a\in R$.
  Thus $a g_+ \mapsto a g_-$ forms a $w$-isometry from $C_+$ to
  $C_-$. The codes, however, are not monomially equivalent because
  their entries come from different right $\U$-orbits.
\end{proof}


We summarize our findings in the following theorem.

\begin{theorem}
  A rational bi-invariant weight function on a finite Frobenius ring
  satisfies the Extension Property if and only if its orthogonality
  matrix is invertible.
\end{theorem}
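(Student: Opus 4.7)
The plan is to observe that the theorem is simply the conjunction of the two propositions already proved in this section, and that these supply the two directions of the biconditional. The proof thus amounts to attributing each implication to its source.

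For the forward direction (invertibility of $W_0$ implies the Extension Property), I would invoke the first proposition of the section. Two routes are available. Via Approach~1, Proposition~\ref{prop_matrix} identifies the transpose of $W_0$ with the matrix of the $\Q$-linear map $\tilde w \colon {}^\U \A_0 \to V_0^\U$, so invertibility of $W_0$ forces $\tilde w$ to be bijective; since $w_{\rm H} \in V_0^\U = \im\tilde w$, Corollary~\ref{cor_smult} yields the Extension Property. Via Approach~2 one instead appeals directly to \cite[Proposition~12]{wood97} or \cite[Theorem~3.1]{wood99a}.

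For the reverse direction I would invoke Proposition~\ref{prop_om-reverse}, whose contrapositive construction is the main piece of genuine work, and would have been the main obstacle were it not already carried out above. Starting from a nonzero integer vector $v$ in the kernel of $W_0$, one builds two single-generator codes $C_+$ and $C_-$: the generator $g_\pm$ encodes the positive (respectively negative) part of $v$ by placing $|v_{cR}|$ copies of a representative of each right $\U$-orbit $cR$ into the appropriate generator, with a trailing $1 \in R$ appended to ensure annihilator-freeness. The identity $w(a g_+) - w(a g_-) = (W_0 v)_{Ra} = 0$ then makes $a g_+ \mapsto a g_-$ into a $w$-isometry between $C_+$ and $C_-$, whereas the two codes cannot be monomially equivalent since their nonzero coordinates lie in disjoint right $\U$-orbits, contradicting the Extension Property. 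With both propositions in hand, the theorem stated above is immediate.
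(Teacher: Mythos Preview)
Your proposal is correct and matches the paper's approach exactly: the theorem is presented there as a summary of the two preceding propositions, with no additional argument, and you have correctly identified which proposition supplies each implication (and faithfully recapped their proofs). The only quibble is terminological: given the biconditional as stated, ``Extension Property $\Rightarrow$ $W_0$ invertible'' is the forward direction, not the reverse, but this does not affect the mathematics.
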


The ultimate goal is to give necessary and sufficient conditions on a
bi-invariant weight $w$ on a finite Frobenius ring $R$ so that its
orthogonality matrix $W_0$ is invertible.  We are able to derive such
a result for finite principal ideal rings.

\subsection*{Extended Orthogonality Matrices}

Let $R$ be a finite Frobenius ring and let $w$ be a bi-invariant
weight function with $w(0) = 0$.  The orthogonality matrix for the
weight $w$ was defined as $W_0 = \big( w(rs) \big){}_{Rr\ne 0,\,sR\ne
  0}$.  Now define the {\em extended} orthogonality matrix for $w$ as
$W = \big( w(rs) \big){}_{Rr,\,sR}$.  In order to examine the
invertibility of $W_0$ we obtain a formula for $\det W$, the
determinant of the matrix $W$.  (Note that $\det W$ is well-defined up
to multiplication by $\pm1 $, the sign depending on the particular
orderings of the rows and columns of $W$.)  First we relate $\det W$
to $\det W_0$, viewing $w(0)$ as an indeterminate.

\begin{prop}\label{prop_W-W0}
  The determinant $\det W_0$ is obtained from $\det W$ by dividing
  $\det W$ by $w(0)$ and then setting $w(0) = 0$.
\end{prop}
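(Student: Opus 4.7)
The plan is to use the fact that the row of $W$ indexed by $Rr=0$ and the column indexed by $sR=0$ consist entirely of the entry $w(0)$ (since $w(0\cdot s)=w(r\cdot 0)=w(0)$). If I order the indexing so these ``zero'' row and column come first, then $W$ takes the block form
\[
  W \;=\; \begin{pmatrix} w(0) & w(0)\,\mathbf{1}^{T} \\ w(0)\,\mathbf{1} & W_0 \end{pmatrix},
\]
where $\mathbf{1}$ is the column vector of all ones (of length equal to the number of nonzero right principal ideals) and $W_0$ is the submatrix indexed by the nonzero principal ideals.

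Next I would subtract the first (zero-indexed) row from every other row. This elementary operation preserves the determinant, zeros out the lower part of the first column, and replaces the lower-right block with $W_0 - w(0)\,J$, where $J:=\mathbf{1}\mathbf{1}^{T}$ is the all-ones matrix of the appropriate size. Since the resulting matrix is block upper-triangular, one reads off
\[
  \det W \;=\; w(0)\cdot\det\bigl(W_0 - w(0)\,J\bigr).
\]
Dividing by $w(0)$ and then specialising to $w(0)=0$ gives $\det(W_0 - w(0)\,J)\bigr|_{w(0)=0} = \det W_0\bigr|_{w(0)=0}$, because the correction term $-w(0)\,J$ vanishes at $w(0)=0$. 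Under the standing hypothesis $w(0)=0$ this is precisely $\det W_0$, which is the claim.

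The argument is essentially a block-matrix manipulation and I do not anticipate any serious obstacle. The one subtlety worth flagging is that $W_0$ itself may contain entries equal to $w(0)$ whenever a product $rs$ vanishes with $Rr,sR\neq 0$ (as happens already for $R=\Z_4$ with $r=s=2$). The indeterminate interpretation of $w(0)$ must therefore be applied uniformly to every such entry of $W$, not merely to the extra row and column that extend $W_0$ to $W$; once this is done consistently, the computation above goes through verbatim.
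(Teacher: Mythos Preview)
Your argument is correct and essentially identical to the paper's own proof: order the zero-indexed row and column first, subtract the first row from all others, and read off $\det W = w(0)\,\det(W'-w(0)J)$, then specialise $w(0)=0$. The subtlety you flag about entries of the lower-right block that already equal $w(0)$ is exactly why the paper names that block $W'$ rather than $W_0$ before specialising; your remark handles this correctly.
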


\begin{proof}
  We treat $w(0)$ as an indeterminate $w_0$.  Up to a sign change in
  $\det(W)$, we may assume that the rows and columns of $W$ are
  arranged so that the first row is indexed by $R0$ and the first
  column is indexed by $0R$.  Then $W$ has the form
  \[ W = \left[ \begin{array}{c|c}
      w_0 & w_0 \,\cdots\, w_0 \\ \hline
      w_0 & \\
      \vdots & W' \\
      w_0 & 
    \end{array} \right] . \]
  By subtracting the first row from every other row, we find that
  $\det W = w_0 \det(W'-w_0J)$, where $J$ is the all-one matrix.
  Finally the matrix $W_0$ equals the matrix $W'-w_0J$ evaluated at
  $w_0 = 0$, so that $\det W_0 = \det(W' - w_0J)|_{w_0 = 0}$.
\end{proof}

Note that the extended orthogonality matrix $W$ is not invertible for
weights $w$ satisfying $w(0) = 0$.


\section{Bi-invariant Weights with Invertible Orthogonality\\ 
  Matrix on Principal Ideal Rings}%
\label{sec:bi-inv-wts}

Let $R$ be a finite principal ideal ring, and let $w$ be a
bi-invariant weight on $R$.  Assume $W$ is the extended orthogonality
matrix of $w$.  We are interested in the determinant of $W$ and look
for a way to evaluate this determinant.

We will define an invertible matrix $(Q_{cR,Rx})_{cR,\, Rx}$ with
determinant $\pm 1$ and multiply $W$ by $Q$ from the right to arrive
at $WQ$; then $\det(WQ) = \pm\,\det(W)$.  The most significant
advantage of considering $WQ$, rather than $W$, is that $WQ$ will be a
lower triangular matrix for which we can easily calculate the
determinant.

Define for any finite ring the matrix $Q$ by
\[ Q_{cR, Rx} \;:=\; \mu( (Rx)^\perp, cR ) \: , \]
for $cR\le R_R$ and $Rx\le {_RR}$, where $\mu$ is the M\"obius
function of the lattice $L^*$ of all right ideals of $R$.
    
\begin{lemma}\label{lemQinvertible}
  For a finite principal ideal ring $R$, the matrix $Q$ is an
  invertible matrix with determinant $\pm1$.
\end{lemma}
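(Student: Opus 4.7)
The plan is to exploit two structural facts about a finite principal ideal ring $R$ that together pin down $Q$. First, as the paper notes, $R$ is Frobenius, so the right-annihilator map $\perp : L({}_RR) \to L(R_R) = L^*$ is an order anti-isomorphism. Second, since every one-sided ideal in $R$ is principal, the set of left ideals appearing as row labels $Rx$ and the set of right ideals appearing as column labels $cR$ of $Q$ are precisely $L({}_RR)$ and $L^*$, respectively. Hence $Rx \mapsto (Rx)^\perp$ is an honest bijection between the column and row index sets of $Q$, and in particular $Q$ is square.

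With this bijection in hand, I would permute the columns of $Q$, relabeling the column originally indexed by $Rx$ by the right ideal $J := (Rx)^\perp \in L^*$. This column permutation alters the determinant by a sign $\pm 1$. After relabeling, the resulting matrix $\tilde{Q}$ has entries
\[ \tilde{Q}_{cR,\,J} \;=\; \mu(J, cR), \]
with both $cR$ and $J$ ranging over $L^*$. Thus $\tilde{Q}$ is (the transpose of) the matrix that represents the M\"obius function $\mu$ of $L^*$ in the incidence algebra $\mathbb{A}(L^*)$ from Section~\ref{sec:prelims}.

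To compute $\det \tilde{Q}$, I would list the elements of $L^*$ in a linear extension $I_1, I_2, \dots, I_m$ of the partial order $\le$, so that $I_i \le I_j$ forces $i \le j$. With respect to this ordering, $\tilde{Q}$ is lower triangular because $\mu(J, I) = 0$ whenever $J \not\le I$, and its diagonal entries are $\mu(I,I) = 1$. Consequently $\det \tilde{Q} = 1$, and therefore $\det Q = \pm 1$, proving invertibility.

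I do not expect any genuine obstacle: the only subtlety is to recognize that the principal-ideal-ring hypothesis is used precisely to guarantee that $(Rx)^\perp$ always lies in $L^*$ (i.e.\ is a right ideal that actually appears as a row label of $Q$), so that $\perp$ restricts to a bijection between the two index sets. For a general Frobenius ring this is where the argument would break, since $(Rx)^\perp$ need not be a principal right ideal. Everything after that is standard incidence-algebra bookkeeping.
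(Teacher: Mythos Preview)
Your argument is correct and is essentially the same as the paper's: both hinge on the observation that, in a principal ideal ring, $\perp$ bijects the column labels $Rx$ with the row labels $cR$, so that after this relabeling $Q$ becomes the M\"obius matrix of $L^*$, which is unitriangular. The only cosmetic difference is that the paper phrases this by exhibiting the explicit inverse $T_{Ra,bR}=\zeta(bR,(Ra)^\perp)$ and verifying $TQ=I$ via the defining relation of $\mu$, whereas you permute columns and read off triangularity directly.
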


\begin{proof}
  We claim that the inverse of $Q$ is given by $T_{Ra, bR} :=
  \zeta(bR, (Ra)^\perp)$, where $\zeta$ is the indicator function of
  the poset $L^*$, meaning
  \[ \zeta(xR, yR) \;=\; \left\{\begin{array}{ccl}
      1 & : & xR \le yR \:, \\
      0 & : & \mbox{otherwise} \:.
    \end{array}\right.\]
  We compute the product $TQ$,
  \[ (TQ)_{Ra,Rx} \;=\; \sum_{cR} {  \zeta(cR, (Ra)^\perp)
     \, \mu ( (Rx)^\perp , cR)} \:. \] 
    By the definition of $\zeta$ and the fact that
  $\mu((Rx)^\perp,cR) =0$ unless $(Rx)^\perp \le cR$, the expression
  above simplifies to
  \[ (TQ)_{Ra,Rx} \;= \sum_{ (Rx)^\perp \le cR \le (Ra)^\perp} {\mu
    ((Rx)^\perp , cR)} \:, \] which is $1$ for $(Rx)^\perp =
  (Ra)^\perp$ and $0$ otherwise by the definition of the M\"obius
  function.
  
  The matrix $T$ is upper triangular with $1$s on the main diagonal.
  Thus $\det T$ and hence $\det Q$ equal $\pm 1$.  (The $\pm 1$ allows
  for different orders of rows and columns.)
\end{proof}

\begin{example}
  Let $R := \F_q[x,y] / \langle x^2, y^2 \rangle$, which is a
  commutative local Frobenius ring.  (When $q=2^k$, $R$ is isomorphic
  to the group algebra over $\F_{2^k}$ of the Klein $4$-group.)  Here,
  $(Rxy)^\perp = xR + yR$ is not principal and thus the above proof
  does not apply; in fact, the matrix $Q$ turns out to be singular in
  this case.

  On the other hand, the Frobenius ring $R$ is not a counter-example
  to the main result below.  In fact, $\det(W_0) = \pm q \,
  w(xy)^{q+3}$ satisfies the formula in
  \eqref{eq:det-W0-factorization} below (up to a nonzero
  multiplicative constant), so that the main result still holds over
  $R$.
\end{example}

We are now ready to state the main theorems.  The proof of the next
result is contained in the final section.

\begin{theorem}\label{thm:WQ}
  If $R$ is a finite principal ideal ring, then the matrix $WQ$ is
  lower triangular.  The diagonal entry at position $(Ra, Ra)$ is
  $\sum\limits_{dR \le aR} w( d ) \, \mu(0, dR)$.
\end{theorem}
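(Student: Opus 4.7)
The plan is to fix indices $Rr$ and $Rx$, set $X := (Rx)^\perp$ and $Y := (Rr)^\perp$ (both principal right ideals since $R$ is a principal ideal Frobenius ring), and introduce an auxiliary function $\overline{w}$ on right principal ideals by $\overline{w}(K) := w(k)$ for any generator $k$ of $K$. This is well-defined by the right-invariance of $w$ and satisfies $\overline{w}(0) = 0$. Writing $w(rc) = \overline{w}(rcR)$ and ranging $I = cR$ over right principal ideals, the defining sum becomes
\[(WQ)_{Rr,Rx} \;=\; \sum_{\substack{I \text{ principal}\\ I \ge X}} \overline{w}(rI)\, \mu(X, I).\]

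The crucial identity is $\overline{w}(rI) = \overline{w}(r(I+Y))$, which follows from $rY = 0$ and hence $rI = r(I+Y)$ as right ideals. Regrouping by $J := I+Y$, which is principal because sums of principal right ideals stay principal in a PIR, yields
\[(WQ)_{Rr,Rx} \;=\; \sum_{\substack{J \text{ principal}\\ J \ge Y}} \overline{w}(rJ)\, \Sigma_J,\qquad \Sigma_J \;:=\; \sum_{\substack{I \text{ principal}\\ X \le I,\ I+Y = J}} \mu(X, I).\]

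When $Rx \le Rr$, equivalently $X \ge Y$, every $I \ge X$ automatically satisfies $I \ge Y$ and hence $I+Y = I$, so $\Sigma_J = \mu(X, J)$ whenever $J \ge X$. The surjection $\psi_r\colon R \to rR$, $c \mapsto rc$, has kernel $Y$ and therefore induces a M\"obius-preserving poset isomorphism between principal right ideals containing $Y$ and principal right ideals contained in $rR$. Substituting $K = rJ$ gives
\[(WQ)_{Rr,Rx} \;=\; \sum_{\substack{K \text{ principal}\\ r(Rx)^\perp \le K \le rR}} \overline{w}(K)\, \mu(r(Rx)^\perp, K).\]
On the diagonal $Rr = Rx = Ra$, one has $r(Rx)^\perp = a\{y \in R : ay = 0\} = 0$, which collapses the formula to the claimed $\sum_{dR \le aR} w(d)\, \mu(0, dR)$.

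To prove the triangularity I would show $(WQ)_{Rr,Rx} = 0$ whenever $Rx \not\le Rr$ (i.e., $X \not\ge Y$) by establishing $\Sigma_J = 0$ for every principal $J$. If $J \not\ge X+Y$ the defining sum of $\Sigma_J$ is empty. Otherwise the standard M\"obius identity $\sum_{I \in [X,J]\text{ prin}} \mu(X, I) = \delta_{X,J}$, grouped by $K := I+Y$, gives
\[\delta_{X,J} \;=\; \sum_{K \in [X+Y,\, J]\text{ prin}} \Sigma_K, \qquad \text{whence}\qquad \Sigma_J \;=\; \delta_{X,J} - \sum_{K \in [X+Y,\, J)\text{ prin}} \Sigma_K.\]
Since $X \subsetneq X+Y \le J$ in this case, $\delta_{X,J} = 0$; the base case $J = X+Y$ yields $\Sigma_{X+Y} = 0$ immediately, and strong induction along the poset of principal right ideals above $X+Y$ concludes $\Sigma_J = 0$. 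The main technical obstacle is this inductive M\"obius argument: it depends essentially on the PIR hypothesis to guarantee that the annihilators $X, Y$ and all sums $I+Y$ remain principal, so that the groupings and summation indices stay within the lattice on which $Q$ is defined.
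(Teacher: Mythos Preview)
Your argument is correct and is essentially the paper's own proof, phrased upstairs rather than downstairs: the paper partitions the sum over $cR\in[(Rb)^\perp,R]$ by the image $dR=acR\in[0,aR]$ and then passes to $K=L_a^{-1}(dR)$, whereas you group directly by $J=I+Y=cR+(Rr)^\perp$, and these agree under the interval isomorphism $\nu_a\colon[(Ra)^\perp,R]\to[0,aR]$. The inductive M\"obius cancellation you run on $\Sigma_J$ is exactly the paper's minimal-counterexample argument on the inner sum, and your use of the PIR hypothesis (to keep $X$, $Y$, and $I+Y$ principal) matches the paper's reliance on it.
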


We conclude that the determinant of $WQ$ and hence that of $W$ is
given by
\[ \det(W) \;=\; \pm\, \det(WQ) \;=\; 
\pm\prod_{aR} \, \sum_{dR\le aR} w(d)\, \mu(0,dR) \:. \]
Applying Proposition~\ref{prop_W-W0} we find the determinant of $W_0$
to be 
\begin{equation}
 \label{eq:det-W0-factorization}
 \det(W_0) \;=\; \pm\prod_{aR\ne 0} \, \sum_{0\ne dR\le
  aR} w(d)\, \mu(0,dR) \:, 
\end{equation}
as in $\det(W)$ the term $aR = 0$ provides a factor of $w(0)$ which
gets divided away, and in each remaining term the contribution from
$dR =0R$ is $w(0)$ which is set equal to $0$.

This yields our main result: a characterization of all bi-invariant
weights on a principal ideal ring that satisfy the Extension Property.

\begin{theorem}[Main Result]\label{maintheorem}
  Let $R$ be a finite principal ideal ring and let $\mu$ be the
  M\"obius function of the lattice $L^*$ of all right ideals of $R$.
  Then a bi-invariant rational weight $w$ on $R$ satisfies the
  Extension Property if and only if
  \[ \sum_{0\ne dR\le aR} w(d)\, \mu(0,dR) \;\ne\, 0\quad \mbox{for all
    $aR \ne 0$} \:. \]
\end{theorem}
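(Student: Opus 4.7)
The plan is to deduce the Main Result as an essentially immediate consequence of the machinery already assembled: the Extension Property has been translated into a determinant condition on $W_0$, and Theorem~\ref{thm:WQ} (proved separately in the next section) supplies the explicit factorization. So the proof I would write is little more than bookkeeping, combining three ingredients in sequence.

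First, I would invoke the theorem at the end of Section~\ref{sec:orthog-matrices}, which reduces the Extension Property for a rational bi-invariant weight $w$ on a finite Frobenius ring to the invertibility of its orthogonality matrix $W_0$, i.e.\ to $\det W_0 \ne 0$. Since every finite principal ideal ring is Frobenius, this equivalence applies in our setting. Thus the task is to identify when $\det W_0 \ne 0$.

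Second, I would assemble $\det W_0$ from the factorization of $\det W$ produced by Theorem~\ref{thm:WQ}. Since $WQ$ is lower triangular with diagonal entries $\sum_{dR\le aR} w(d)\,\mu(0,dR)$, and since $\det Q = \pm 1$ by Lemma~\ref{lemQinvertible},
\[
\det W \;=\; \pm\det(WQ) \;=\; \pm\prod_{aR} \sum_{dR\le aR} w(d)\,\mu(0,dR).
\]
To pass from $\det W$ to $\det W_0$ I would apply Proposition~\ref{prop_W-W0}, i.e.\ divide by $w(0)$ and then specialize $w(0)=0$. The factor indexed by $aR=0$ equals $w(0)$ (the only $dR\le 0$ is $0$ itself, and $\mu(0,0)=1$), so the division cancels it exactly; in every remaining factor, setting $w(0)=0$ removes only the summand $dR=0R$. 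The outcome is precisely \eqref{eq:det-W0-factorization}:
\[
\det W_0 \;=\; \pm\prod_{aR\ne 0} \,\sum_{0\ne dR\le aR} w(d)\,\mu(0,dR).
\]

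Finally, a product of rational numbers is nonzero if and only if each factor is nonzero, so $\det W_0 \ne 0$ is equivalent to the stated condition $\sum_{0\ne dR\le aR} w(d)\,\mu(0,dR) \ne 0$ for every $aR\ne 0$. Combined with the equivalence from step one, this yields the Main Result. The genuinely difficult step has been quarantined inside Theorem~\ref{thm:WQ}, whose proof hinges on the fact that in a principal ideal ring the annihilator of a principal ideal is again principal (so that the sum defining $(WQ)_{Ra,Rx}$ can be re-indexed via the M\"obius inversion over the lattice of right ideals); assuming that result, the argument above is routine.
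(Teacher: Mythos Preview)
Your proposal is correct and follows essentially the same route as the paper: reduce the Extension Property to invertibility of $W_0$ via the theorem at the end of Section~\ref{sec:orthog-matrices}, factor $\det W$ using Theorem~\ref{thm:WQ} and Lemma~\ref{lemQinvertible}, pass to $\det W_0$ via Proposition~\ref{prop_W-W0}, and read off the nonvanishing condition factor by factor. The paper presents this derivation in the paragraphs immediately preceding the statement of Theorem~\ref{maintheorem}, so the theorem is indeed stated as a corollary of the assembled machinery, exactly as you surmised.
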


The condition in Theorem~\ref{maintheorem} needs to be checked only
for nonzero right ideals $aR\leq\soc(R_R)$, since we have
$\mu(0,dR)=0$ if $dR\not\leq\soc(R_R)$ (see
\cite[Proposition~2]{st:homippi}, for example) and since every right
ideal contained in $\soc(R_R)$ is principal.  As a consequence, the
Extension Property of $w$ depends only on the values of $w$ on the
socle of $R$.

\begin{example} 
  For a chain ring $R$, the main result simply says that a
  bi-invariant weight function $w$ satisfies the Extension Property if
  and only if it does not vanish on the socle of $R$ (compare with
  \cite{grefhono05} and \cite[Theorem~9.4]{wood09}).  For $R =
  {\mathbb Z}_4$, it states that a bi-invariant weight will satisfy
  the Extension Property if and only if $w(2) \ne 0$.
\end{example}
   
\begin{example}  
  Let $R := {\mathbb Z}_m$.  The nonzero ideals in $\soc(\Z_m)$ are of
  the form $a\Z_m$ with $a\mid m$ and $m/a>1$ square-free.  The
  M\"obius function of such an ideal is
  $\mu(0,a\Z_m)=\mu(m/a)=(-1)^r$, where $\mu(\cdot)$ denotes the
  one-variable M\"obius function of elementary number theory and $r$
  is the number of different prime divisors of $m/a$.  According to
  Theorem~\ref{maintheorem}, an invariant weight $w$ on $\Z_m$ has the
  Extension Property if and only if
  \[ \sum_{s\mid\frac{m}{a}} w(sa) \, \mu 
  \Big( \frac{m}{sa} \Big) \;=\; (-1)^r \,
  \sum_{s\mid\frac{m}{a}} w(sa) \, \mu(s) \;\ne\; 0\] 
  for all (positive) divisors $a$ of $m$ such that $m/a$ is
  square-free and $>1$.  We thus recover the main theorem of
  \cite{grefhono06}.
\end{example}
  
\begin{example}\label{ex:examples} 
  Let $R := {\rm Mat}_n(\F_q)$, $n\geq2$, the ring of
  $n\times n$ matrices over the finite field $\F_q$ with $q$ elements,
  so that $\U = {\rm GL}_n(\F_q)$.  The ring $R$ is a finite principal
  ideal ring that is not a direct product of chain rings.  For each
  matrix $A\in R$, the left $\U$-orbit $\U A$ can be identified with
  the row space of $A$, and similarly, the right $\U$-orbits
  correspond to the column spaces.

  Let $w$ be a bi-invariant weight on $R$.  Its value $w(A)$ depends
  only on the rank of the matrix $A$, and therefore we can write
  $w([{\rm rank}\,A]) := w(A)$.  Now for $n=2$, the main result says
  that $w$ satisfies the Extension Property if and only if $w([1])\ne
  0$ and $q\,w([2]) \ne (q+1)\,w([1])$.  For $n=3$, $w$ satisfies the
  Extension Property if and only if $w([1])\ne 0$, $\,q\,w([2]) \ne
  (q+1)\,w([1])$, and $q^3\,w([3]) + (q^2+q+1)\,w([1]) \ne
  (q^2+q+1)\,q\,w([2])$.
    
  It was shown in \cite[Theorem~9.5]{wood09} that the relevant 
  non-vanishing sums are
  \begin{equation}\label{eq:wmatrixterm}
    \sum_{i=1}^{s}{(-1)^i q^{(\upontop{i}{2})} 
      \left[\upontop{s}{i}\right]_q w([i]) }  \:,
  \end{equation}
  where $[\upontop{s}{i}]_q$ is the $q$-nomial (Cauchy binomial)
  defined as
  \[ \left[\upontop{k}{l}\right]_q \,:=\;
  \frac{(1 - q^k)(1-q^{k-1}) \dots (1-q^{k-l+1})}
  {(1-q^l)(1-q^{l-1}) \dots (1-q)}  \:. \]
  
  The {\em rank metric} $w([k]) := k$ satisfies these conditions.
  First we state the Cauchy binomial theorem:
  \begin{equation*}\label{eq:cauchythm}
    \prod_{i=0}^{k-1}{(1 + xq^i)} \;=\; 
    \sum_{j=0}^{k}{\left[\upontop{k}{j}\right]_q  q^{(\upontop{j}{2})} x^j} \:.
  \end{equation*}
  Now we write the term in \eqref{eq:wmatrixterm} for the rank metric,
  changing the sign and including $i=0$ trivially in the sum. This can
  then be seen as the evaluation of a derivative.
  \[ \sum_{i=0}^{s}{i(-1)^{i-1}  \ q^{(\upontop{i}{2})} 
    \left[\upontop{s}{i}\right]_q } \;=\;
  \left. \frac{d}{dx}\sum_{i=0}^{s}{x^i q^{(\upontop{i}{2})} 
      \left[\upontop{s}{i}\right]_q  }\right|_{x=-1} \:. \]
  Applying the Cauchy binomial theorem and evaluating the derivative yields: 
  \[ \left. \frac{d }{dx} \prod_{i=0}^{s-1}{(1+xq^i)} \right|_{x=-1} 
  \;=\; \left( \prod_{i=0}^{s-1}{(1-q^i)} \right) 
  \left( \sum_{i=0}^{s-1}{\frac{q^i}{1-q^i}} \right) \:. \]
  Both expressions on the right are nonzero provided $q$ is not $\pm
  1$, independent of $s$.  Hence the rank metric satisfies the
  Extension Property for all $q$ and $n$.
\end{example} 
    
\begin{example}
  More generally, let $R = {\rm Mat}_n(S)$ be a matrix ring over a
  finite chain ring $S$.  Then $\soc(R_R) = \soc({}_RR) = {\rm
    Mat}_{n\times n}(\soc S) \cong {\rm Mat}_{n\times n}(\F_q)$ as a
  (bi-)module over the residue class field $S/\rad S\cong\F_q$.  Hence
  the previous example applies and characterizes all bi-invariant
  weights $w\colon R\to\Q$ having the Extension Property.
\end{example}

\begin{example}  
  Any finite semisimple ring is a direct product of matrix rings over
  finite fields and therefore a principal ideal ring.  Hence, the main
  result also applies to this case.
\end{example}


\section{A Proof of Theorem~\ref{thm:WQ}}%
\label{sec:proof}

We perform the matrix multiplication and see that the entry of $WQ$ in
position $(Ra,Rb)$ is given by the expression
\[ (WQ)_{Ra,Rb} \;=\; \sum_{cR} W_{Ra,cR} \, Q_{cR,Rb} \;=\;
\sum_{cR} w(ac)\, \mu((Rb)^\perp ,cR) \:. \] 
According to the definition of the M\"obius function, $\mu((Rb)^\perp,
cR)$ can be nonzero only when $(Rb)^\perp \le cR$ (or: when $cR \in
[(Rb)^\perp, R]$, using interval notation on the lattice $L^*$ of all
right (necessarily principal) ideals of $R$).  With this in mind we
write
\begin{equation}\label{main}
  (WQ)_{Ra,Rb} \;= \sum_{cR \in [(Rb)^\perp, R]} w(ac) \, \mu((Rb)^\perp ,cR) \:.
\end{equation}

\subsection*{Diagonal Entries}

The diagonal terms of $WQ$ are given by 
\[ (WQ)_{Ra,Ra} \;= \sum_{ cR \in [(Ra)^\perp,R]} w(ac)\, \mu((Ra)^\perp , cR) \:. \]

For an element $a\in R$ consider the left multiplication operator
$L_a: R \longrightarrow aR,\; t\mapsto at$.  The mapping $L_a$ is a
(right) $R$-linear mapping with kernel $(Ra)^\perp$, and the
isomorphism theorem yields an induced order isomorphism of intervals
\[ \nu_a:[(Ra)^\perp , R] \longrightarrow [0,aR] \:, \quad
J \mapsto aJ \:. \] 
It follows that if $J_1, J_2\in [(Ra)^\perp,R]$, then $\mu(J_1,J_2) =
\mu ( \nu_a(J_1), \nu_a(J_2) ) = \mu (aJ_1, aJ_2)$.

The diagonal term simplifies to
\begin{align*}
  (WQ)_{Ra,Ra} &\;=\;
  \sum_{cR \in [(Ra)^\perp,R]} w(ac)\, \mu((Ra)^\perp , cR) \\
  &\;=\; \sum_{acR \in [0,a R]} w(ac)\, \mu(0, acR) \\
  &\;=\; \sum_{dR \in [0,a R]} w(d)\, \mu(0, dR) \:,
\end{align*}
where we have applied the above interval isomorphism with $J_1 =
(Ra)^\perp$ and $J_2 = cR$, followed by the relabeling $acR=dR$.

Finally, observe that the formula $(WQ)_{Ra,Ra} = \sum_{dR \in [0,a
  R]} w(d)\, \mu(0, dR)$ does not depend on the choice of generator
$a$ for the left ideal $Ra$.  Indeed, any other generator has the form
$ua$, where $u$ is a unit of $R$.  Left multiplication by $u$ induces
an order isomorphism of intervals $\nu_u: [0,aR] \longrightarrow
[0,uaR]$, so that $\mu(0,dR) = \mu(0,udR)$ for all $dR \in [0,aR]$.
Since $w$ is left-invariant, we have $w(ud) = w(d)$, and the right
side of the formula is well-defined.

\subsection*{Lower Triangularity}

Now let us return to the general form of the matrix $WQ$ given in
\eqref{main}.  We would like to prove that $WQ$ is lower triangular,
i.e., that $Rb \nleq Ra$ will imply that $(WQ)_{Ra,Rb}=0$. To that
end, assume
\begin{equation} \label{eq:lower-tri}
  Rb \nleq Ra \:.
\end{equation}
As above, the left multiplication operator $L_a$ induces a mapping
$\lambda_a : [0,R] \ra [0,aR]$, which in turn induces a partition on
$[0,R]$ in a natural way. We first rewrite the general expression for
$(WQ)_{Ra,Rb}$ taking into account this partition.

\[ (WQ)_{Ra,Rb} \;= \sum_{dR \in [0, aR]} w(d) 
\mathop{\sum_{cR \in [(Rb)^\perp,R] }}_{\lambda_a(cR)=dR} \mu((Rb)^\perp ,cR) \:. \]

Our goal is to examine the inner sum and show that it vanishes for
every $dR$ in question. In other words, we will show that \[
\mathop{\sum_{cR \in [(Rb)^\perp,R] }}_{\lambda_a(cR)=dR}
\mu((Rb)^\perp ,cR) \;=\; 0 \:,\quad \mbox{for all $dR \le aR$} \:. \]

We do this by induction on $dR$ in the partially ordered set $[0,aR]$.
Accordingly, we assume the existence of some $dR\in [0,aR]$ which is
minimal with respect to the property that
\[ \mathop{\sum_{cR \in [(Rb)^\perp,R] }}_{\lambda_a(cR)=dR} \mu((Rb)^\perp ,cR) 
\;\ne\; 0 \:. \] 

Consider the right ideal $K := L_a^{-1}(dR) = \sum\limits_{acR \le dR}
cR$. For this ideal we have $(Ra)^\perp \le K$, and moreover, $cR \le
K$ is equivalent to $acR \le dR$. For this reason
\[ \mathop{\sum_{cR \in [(Rb)^\perp,R] }}_{acR\le dR} \mu((Rb)^\perp ,cR)
\;=\; \sum_{ cR \in [(Rb)^\perp , K]} \mu((Rb)^\perp ,cR) \:. \]
By properties of $\mu$, the latter expression is nonzero if and only
if $K=(Rb)^\perp$. This would however imply $(Rb)^\perp \ge
(Ra)^\perp$ (because $(Ra)^\perp \le K$) and hence $Rb\le Ra$,
contrary to assumption \eqref{eq:lower-tri}.  Hence, we conclude
\[ 0 \;= \mathop{\sum_{cR \in [(Rb)^\perp,R]}}_{acR\le dR} \mu((Rb)^\perp ,cR) 
\;= \mathop{\sum_{cR \in [(Rb)^\perp,R]}}_{acR=dR} \mu((Rb)^\perp ,cR) + 
\mathop{\sum_{cR \in [(Rb)^\perp,R]}}_{acR<dR} \mu((Rb)^\perp ,cR) \:.  \]

In this equation the minimality property of $dR$ implies that the last
term vanishes.  This finally forces
\[ \mathop{\sum_{cR \in [(Rb)^\perp,R]}}_{acR=dR} \mu((Rb)^\perp ,cR) \;=\; 0 \:, \] 
contradicting the minimality property of $dR$.  Lower triangularity
follows and this finishes the proof of Theorem~\ref{thm:WQ}.\qed

Note that this proof heavily relies on the hypothesis that $R$ is a
finite principal ideal ring. For a general finite Frobenius ring the
architecture of a proof will need to be vastly restructured.
Nonetheless, we conjecture that the main result, as stated, holds over
any finite Frobenius ring.

\bibliographystyle{amsplain}

\bibliography{paper}

\end{document}